\documentclass[11pt]{amsart}
\usepackage[margin=3.7cm]{geometry}

\numberwithin{equation}{section}
\usepackage{amsmath,amsfonts,amsthm,amssymb,stmaryrd,color,verbatim}
\usepackage[pagebackref]{hyperref}

\usepackage[shortlabels]{enumitem}
\usepackage{amsrefs}

\newtheorem{theorem}{Theorem}[section]
\newtheorem{lemma}[theorem]{Lemma}
\newtheorem{proposition}[theorem]{Proposition}
\newtheorem{question}[theorem]{Question}
\theoremstyle{remark}
\newtheorem{remark}[theorem]{Remark}

\newcommand{\op}{\operatorname}
\newcommand{\Cay}{\op{Cay}}
\newcommand{\diam}{\op{diam}}
\newcommand{\gap}{\op{gap}}

\newcommand{\PSL}{\op{PSL}}
\newcommand{\F}{\mathbf{F}}

\renewcommand{\Re}{\op{Re}}

\newcommand\br[1]{{\left(#1\right)}}
\newcommand\floor[1]{\left\lfloor{#1}\right\rfloor}
\newcommand{\gen}[1]{\langle{#1}\rangle}

\newcommand\eps{\varepsilon}

\def\one{\mathbf{1}}
\def\kaz{\kappa}
\def\sgp{\leqslant}
\def\nsgp{\trianglelefteqslant}

\usepackage{todonotes}

\definecolor{mycolor}{rgb}{0.55,0.0,0.16}
\definecolor{myred}{rgb}{0.75,0.0,0.16}
\definecolor{mygreen}{rgb}{0.0,0.4,0.16}
\definecolor{myviolet}{rgb}{1,0,1}
\definecolor{mypink}{rgb}{0.9,0,0.5}

\hypersetup{
colorlinks=true,
linkcolor=true,
linktocpage=true,
pageanchor=true,
hyperindex=true
}

\AtBeginDocument{
\hypersetup{
linkcolor=mycolor,
urlcolor=mypink,
citecolor=mygreen
}
}

\makeatletter
\@namedef{subjclassname@2020}{%
  \textup{2020} Mathematics Subject Classification}
\makeatother

\subjclass[2020]{Primary: 20F65; Secondary: 05C25, 11L07}

\author[Sean Eberhard]{Sean Eberhard}
\address{\parbox{\linewidth}{Sean Eberhard, Mathematics Institute, University of Warwick\\
Coventry CV4\,7AL, United Kingdom \vspace{0.1cm}}}
\email{sean.eberhard@warwick.ac.uk, eberhard.math@gmail.com}

\author[Luca Sabatini]{Luca Sabatini}
\address{\parbox{\linewidth}{Luca Sabatini, Mathematics Institute, University of Warwick\\
Coventry CV4\,7AL, United Kingdom \vspace{0.1cm}}}
\email{luca.sabatini@warwick.ac.uk, sabatini.math@gmail.com}

\thanks{This work was supported by the Royal Society through a University Research Fellowship held by S.E.\ (URF\textbackslash R1\textbackslash 221185).}

\begin{document}
\title{Expanding groups with large diameter}
\maketitle

\begin{abstract}
    We study how the spectral gap and diameter of Cayley graphs depend strongly on the choice of generating set.
    We answer a question of Pyber and Szab\'o (2013)
    by exhibiting a sequence of finite groups $G_n$ with $|G_n| \to \infty$ admitting bounded generating sets $X_n,Y_n$ such that $\operatorname{Cay}(G_n,X_n)$ is an expander
    while $\operatorname{Cay}(G_n,Y_n)$ has super-polylogarithmic diameter.
    The construction uses the semidirect product $G_n = C_p^{n-1} \rtimes S_n$ with $p$ exponentially large in $n$,
    and the analysis reduces to bounding some exponential sums of permutational type.
\end{abstract}

\vspace{0.5cm}
\section{Introduction}

Let $G$ be an infinite group generated by a finite set $X$.
Several important properties of the Cayley graph $\Cay(G,X)$ do not depend on the choice of $X$,
and are actually group properties:
the growth type, the number of ends, hyperbolicity, amenability (via the F{\o}lner condition), and property (T).
The key observation behind these facts is that, for any other finite subset $Y$ of $G$,
there exists a constant $c$ such that every element in $Y$ can be written as the product of at most $c$ elements in $X$.

The situation is different for a sequence of finite groups.
Answering a question of Lubotzky--Weiss~\cite{LW93}, Alon, Lubotzky, and Wigderson~\cite{ALW01}
constructed finite groups whose Cayley graphs are expanders with respect to one bounded-size generating set but not with respect to another.
Specifically, they showed that the groups $C_2^{p+1} \rtimes \PSL_2(p)$ have this property.
In the seminal paper~\cite{Kas07}, Kassabov showed that the symmetric groups also have this property,
and subsequently Kassabov, Lubotzky, and Nikolov~\cite{KLN06} proved that almost all nonabelian finite simple groups are expanders with respect to suitable bounded-size generating sets.

However, all of the groups listed above likely have worst-case polylogarithmic diameter.
In the case of the finite simple groups, this is the content of the famous conjecture of Babai~\cite{BS92}*{Conjecture~1.7}.
This observation prompted Pyber and Szab\'o to ask the following question:

\begin{question}[Pyber \& Szab\'o~\cite{PS13}*{Question~23}]
    \label{quesPS}
    Let $(G_n)_{n \geq 1}$ be a sequence of finite groups and suppose that
    $\Cay(G_n,X_n)$ are expander graphs for some sets $(X_n)_{n \geq 1}$ of bounded cardinality.
    Is it true that for every sequence of generating sets $(Y_n)_{n \geq 1}$ we have
    $$ \diam( \Cay(G_n,Y_n) ) \> \le \> C (\log |G_n|)^C $$
    for some constant $C$?
\end{question}

A similar question was asked by the second author at the 2024 Oberwolfach meeting \emph{Growth and Expansion in Groups} \cite{BD24}*{p.~1028,  Question 5}:
there it was asked whether it is enough to assume that $\diam(\Cay(G_n, X_n))$ is polylogarithmic, i.e., whether polylogarithmic diameter is a group property.

In this note we answer both questions negatively. In fact we show that the diameter of $G$ with respect to the second generating set can be as large as $\exp(c \sqrt{\log |G|})$.

\begin{theorem} \label{thMain}
There exist constants $\delta, \eps > 0$ and a sequence of finite groups $(G_n)_{n \geq 1}$ with $|G_n| \to \infty$
with bounded-size generating sets $X_n, Y_n \subseteq G_n$ such that
\begin{enumerate}[\normalfont(a)]
    \item $\Cay(G_n,X_n)$ are $\eps$-expander graphs, and
    \item $\diam( \Cay(G_n,Y_n) ) \geq \exp(\delta \sqrt{\log|G_n|})$.
\end{enumerate}
\end{theorem}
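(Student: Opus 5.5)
We take $G_n = V \rtimes S_n$, where $V = \{v \in \F_p^n : \sum_i v_i = 0\} \cong C_p^{n-1}$ is the deleted permutation module and $p$ is a prime with $\log p \asymp n$, say $p \approx e^{n}$. Then $\log|G_n| = (n-1)\log p + \log n! \asymp n^2$, so $\exp(\delta\sqrt{\log|G_n|}) = \exp(\delta' n)$, which is a small power of $p$. The two generating sets play opposite roles. The set $Y_n$ makes the translation part move slowly. The set $X_n$ uses the expanding structure of $S_n$ together with a few well-chosen vectors, so that the module is mixed quickly.

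\textbf{Diameter (b).} Let $Y_n = \{\sigma, \tau, e_1 - e_2\}$, where $\sigma,\tau$ generate $S_n$ (an $n$-cycle and a transposition) and $e_1 - e_2 \in V$. This set generates $G_n$, because the $S_n$-orbit of $e_1-e_2$ spans $V$. Any word of length $L$ in $Y_n$ is an element $(v,\pi)$ whose translation part is a sum of at most $L$ vectors of the form $\pm(e_i - e_j)$. In particular $\|v\|_\infty \le L$ when coordinates are viewed as integers in $(-p/2,p/2)$. Now take the element $(v,1)$ with $v = (m,-m,0,\dots,0)$ and $m \approx p/2$. Reaching it requires $L \ge p/2 - O(1)$. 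The diameter is therefore at least about $p/2 = \exp(\Omega(n)) = \exp(\delta\sqrt{\log|G_n|})$.

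\textbf{Expansion (a).} Let $X_n = A \cup B$. Here $A \subset S_n$ is a bounded Kassabov expanding generating set of $S_n$ with gap $\eps_0$. The set $B \subset V$ consists of $k$ vectors $b_1,\dots,b_k$, with $k$ bounded, chosen suitably (for instance at random). We bound the spectral gap by decomposing $\ell^2(G_n)$ according to the characters of $V$.

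The trivial character part is $\ell^2(S_n)$, and $A$ handles it. A nontrivial character $\chi_\xi$, with $\xi \in \F_p^n/\langle \mathbf 1\rangle$ nonzero, lies in an $S_n$-orbit $O$. The corresponding representation is induced from the stabiliser of $\xi$. On this piece, $A$ acts on functions on the orbit $O$, and $b_j$ acts diagonally by multiplication by $e(\xi'\cdot b_j/p)$ for $\xi'\in O$.

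By a standard argument (as in Alon--Lubotzky--Wigderson, or for zig-zag/semidirect products), the combined operator has a uniform gap provided no nonzero $f$ on $O$ is simultaneously almost $A$-invariant and almost $b_j$-invariant for every $j$. By expansion of $A$ on the permutation action on $O$, an almost $A$-invariant $f$ is close to a constant, apart from the part coming from the nontrivial spectrum of the permutation representation on $O$, which $A$ also contracts by the Kassabov gap. It then suffices to show that some $b_j$ moves constants by a definite amount. That amounts to an exponential sum bound
\[
\Bigl|\frac{1}{|O|}\sum_{\pi\in S_n} e\bigl((\xi^\pi\cdot b_j)/p\bigr)\Bigr| \le 1-c
\]
for at least one $j$, uniformly over all nonzero $\xi$.

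\textbf{Main obstacle.} The hard step is proving this uniform bound on permutational exponential sums when $p$ is exponentially large in $n$. The danger comes from $\xi$ with very few distinct or small entries, such as $\xi = e_1 - e_2$ scaled by $t$. For these $\xi$, the phases $t(b_{\pi 1}-b_{\pi 2})/p$ must be well spread mod 1 for some $j$.

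I would first try choosing the $b_j$ with entries drawn at random from $\F_p$, and then bounding by a union bound over the $S_n$-orbits of $\xi$. For each fixed $\xi$, the sum averages to zero, and its variance is controlled by pairs $(\pi,\pi')$ with $\xi^\pi - \xi^{\pi'}$ nonzero. Using $k$ independent vectors reduces the failure probability to $p^{-\Omega(k)}$ per orbit. There are about $p^{n-1}/n!$ orbits, which is too many for a naive union bound.

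So the plan is to split $\xi$ by support size. For $\xi$ with large support, a martingale (McDiarmid) concentration over the random $\pi$ gives cancellation directly, for deterministic well-spread $b_j$. For $\xi$ with small support $s$, the sum reduces to an $s$-dimensional sum over injections, and random $b_j$ with $k$ large handle these via the union bound. The exponential size of $p$ relative to $n$ is exactly what makes the small-support cases delicate.
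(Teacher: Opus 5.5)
Part (b) of your proposal is fine. Your reduction of (a) to the bound $\bigl|\frac1{n!}\sum_{\pi\in S_n}e_p(\gen{\xi^\pi,b_j})\bigr|\le1-c$ for some $j$ is also correct; it is a per-orbit version of the paper's Kazhdan-constant reduction to Theorem~\ref{thExpSum}. (The normalisation there should be $1/n!$, not $1/|O|$.)

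The gap is in the step you yourself flag as the main obstacle: your plan for proving this bound does not work. McDiarmid's inequality over random $\pi$ bounds how far a bounded-differences function of $\pi$ strays from its mean. It is an upper bound on fluctuations and gives no anti-concentration modulo $p$. So it cannot produce cancellation in $\frac1{n!}\sum_\pi e_p(\gen{\xi^\pi,b})$; if anything, a concentrated phase pushes the modulus of this average towards $1$, the opposite of what you need. Whether, say, $t\sum_{i\in S}b_i$ over random $S$ of fixed size is spread out mod $p$ for every $t\in\F_p^\times$ is an arithmetic property of $b$. You never make ``well-spread'' precise or show that it suffices.

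On the other side, take your per-orbit failure probability $p^{-\Omega(k)}$ and roughly $p^s$ orbits of support $s$. (Support must be measured after subtracting the most frequent coordinate, since $\xi$ is only defined modulo $\one$.) Then the union bound covers only $s=O(k)$, so almost everything is left to the invalid concentration step.

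The missing idea is the paper's Cauchy--Schwarz switching argument, which makes the case split unnecessary. Write $\lambda_{v,w}=\frac1{n!}\sum_\sigma e_p(\gen{v,w^\sigma})$. Since this is invariant under $w\mapsto w^\sigma$, assume $w_1\ne w_2$, put $\tau=(1\,2)$ and $u=w_1-w_2\ne0$, and pair each term with its $\tau\sigma$ partner. Cauchy--Schwarz together with $w-w^\tau=(u,-u,0,\dots,0)$ gives
\[
|\lambda_{v,w}|^2\le\frac12+\frac12\Re\Bigl[\frac1{n!}\sum_{\sigma\in S_n}e_p(\gen{v,(w-w^\tau)^\sigma})\Bigr]=\frac12+\frac12\cdot\frac{n|\lambda_v(u)|^2-1}{n-1}\le\frac12+\frac12|\lambda_v(u)|^2,
\]
where $\lambda_v(u)=\frac1n\sum_ie_p(uv_i)$. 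Thus every nonconstant $w$, whatever its support, is controlled deterministically by the $p-1$ support-one sums $\lambda_v(u)$. Only these need Hoeffding plus a union bound, taken over $u$ rather than over orbits.

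With one random vector the paper takes $p\le e^{n/100}$. With your $p\approx e^n$, use a fixed number $k>32$ of independent random $b_j$, and require that for each $u\ne0$ some $j$ has $|\lambda_{b_j}(u)|\le\frac12$. This fails with probability at most $p\,(4e^{-n/32})^k<1$ for large $n$.
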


By contrast having polynomially large diameter \emph{is} a group property,
and such groups are called \emph{almost flat}: see \cite{BT16}*{Theorem~4.1, Corollary~4.16}.
In particular, if (a) holds then $\diam(\Cay(G_n, Y_n)) \le |G_n|^{o(1)}$,
and it is an interesting open question to find the optimal bound in (b).

We now describe the construction.
Notably, there are elements in common with both Alon--Lubotzky--Wigderson and Kassabov.
From now on we suppress the subscript $n$, which will be clear from context.
The group is just
\[G \> = \> V_0 \rtimes S_n,\]
where $V_0 < V = \F_p^n$ is the deleted permutation module of dimension $n-1$,
and $p \sim e^{cn}$ is a large prime.
Note that
\[
    \> |G| \> = \> p^{n-1} n! \> = \> \exp\br{cn^2 + O(n \log n)} .
\]
The two generating sets $X$ and $Y$ both have the form $\{v\} \cup T$ where $v \in V_0$ and $T \subset S_n$.
For $Y$ we make the unimaginative choice
\[
    v = (1, -1, 0, \dots, 0), \qquad T = \{(1, 2), (1, 2, \dots, n) \}.
\]
Clearly $\diam(\Cay(G, Y)) \ge \floor{pn/2} \ge p \sim e^{cn}$, so (b) holds.

The nontrivial part of the proof is the demonstration that there is a choice of $v$ and $T$ such that $\Cay(G, X)$ has a uniform spectral gap.
For $T$ we take a bounded-size expanding generating set for $S_n$ (which exists by the result of Kassabov).
By an analysis similar to that in \cite{ALW01},
the problem of finding $v \in V_0$ reduces to the following ``permutational exponential sum'' estimate.

For $v, w \in V = \F_p^n$, let $\gen {v,w} = \sum_{i=1}^n v_i w_i$.
Let $\one = (1, \dots, 1)$ denote the all-one vector, so that $V_0 = \one^\perp$,
and let $e_p(x) := \exp(2 \pi i x / p)$.

\begin{theorem} \label{thExpSum}
    There exist constants $\delta, \eps >0$ such that the following holds.
    For every integer $n \ge 1$ and prime $p \le e^{\delta n}$,
    there exists $v \in V_0$ such that, for all nonconstant $w \in V$,
    \[
       \left| \frac1{n!} \sum_{\sigma \in S_n} e_p\br{\gen{v, w^\sigma}} \right|
        \> \le \> 1 - \eps.
    \]
\end{theorem}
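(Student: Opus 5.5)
We will take $v$ random with a block structure adapted to a fixed perfect matching. Then, for arbitrary nonconstant $w$, we factorize the exponential sum over the transpositions of that matching. The only place where we use that $p \le e^{\delta n}$ is a single union bound over $p$ dilations.

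\textbf{Setup.} Let $m = \floor{n/2}$ and fix the matching $M = \{\{1,2\},\{3,4\},\dots,\{2m-1,2m\}\}$. Let $H \cong C_2^m$ be the group generated by the transpositions $\tau_k = (2k-1,2k)$. Choose $d_1,\dots,d_m \in \F_p$ independently and uniformly, and put $v_{2k-1} = d_k$, all other coordinates $0$. This $v$ need not lie in $V_0$, but $v - c\one \in V_0$ for $c = (\sum_k d_k)/n$. Replacing $v$ by $v - c\one$ multiplies $e_p(\gen{v,w^\sigma})$ by the $\sigma$-independent phase $e_p(-c\sum_i w_i)$, so the absolute value in Theorem~\ref{thExpSum} is unchanged. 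Write $\sigma = \pi h$ with $h \in H$, and take the orientation convention making $(w^{\pi h})_i = w_{\pi(h(i))}$. Averaging over $h$ first gives
\[
  \left|\frac1{n!}\sum_{\sigma} e_p(\gen{v,w^\sigma})\right| \le \frac{1}{n!}\sum_{\pi} \prod_{k=1}^m \left|\cos\br{\pi d_k \delta_k(\pi)/p}\right|,
\]
where $\delta_k(\pi) = w_{\pi(2k)} - w_{\pi(2k-1)}$. Hence it suffices to show the following: with probability at least a fixed constant $c'>0$ over uniform $\pi$, some $k$ has $\|d_k\delta_k(\pi)/p\| \ge 1/8$ (distance to the nearest integer). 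Then that factor is at most $\cos(\pi/8)$, so the right side is at most $1 - c'(1-\cos(\pi/8))$.

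\textbf{Step 1 (property of $d$; the only use of $p \le e^{\delta n}$).} For a fixed $\delta \ne 0$, the products $d_k\delta$ are i.i.d.\ uniform in $\F_p$. So $\|d_k\delta/p\| < 1/8$ holds independently with probability about $1/4$. By Chernoff, the number of such "bad'' $k$ exceeds $m/2$ with probability at most $e^{-c_0 m}$. A union bound over the $p-1 \le e^{\delta n}$ values of $\delta$, with $\delta < c_0/3$, gives a fixed $d$ with the following property: for every $\delta \neq 0$, the bad set $B_\delta = \{k : \|d_k\delta/p\| < 1/8\}$ has size at most $m/2$. This fixes $v$, uniformly for all $w$.

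\textbf{Step 2 (random $\pi$, arbitrary nonconstant $w$).} Let $K(\pi) = \{k : \delta_k(\pi)\ne 0\}$. Precompose $\pi$ with a uniformly random permutation of the $m$ blocks of $M$, which leaves the uniform distribution of $\pi$ invariant. Conditionally on the multiset of unordered value-pairs $\{w_{\pi(2k-1)}, w_{\pi(2k)}\}$, the block index carrying a given straddling pair, with difference $\delta$, is then uniform. It therefore lands outside $B_\delta$ with probability at least $1/2$. Hence
\[
\Pr(\text{success}) \ge \tfrac12 \Pr(K(\pi)\ne\emptyset).
\]
It remains to bound $\Pr(K(\pi) \ne \emptyset)$ below by an absolute constant for every nonconstant $w$. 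Pick two coordinates $a,b$ with $w_a \ne w_b$. If some matched block receives $a$, its partner differs from $w_a$ with probability at least about $1/n$; this alone is too weak, so argue by cases.
\begin{itemize}
\item[(i)] Suppose at least two values of $w$ each occur fewer than $n/2$ times, or at least a constant fraction of coordinates lie outside the most frequent value. Then a random pair straddles with constant probability, and in fact $K \ne \emptyset$ with probability $1 - o(1)$.
\item[(ii)] Otherwise the most frequent value occupies all but a small set $R$ of coordinates. Then any $i \in R$ that is matched has a partner from the majority value with probability at least $1/2$, and $i$ is matched with probability at least $1 - 1/n$.
\end{itemize}
(For odd $n$ one coordinate is unmatched, which only costs the $1/n$ above.)

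The main obstacle was the apparent need for a union bound over all $p^{n-1}$ vectors $w$, which is far too many. The factorization over $H$ together with the block-symmetrization in Step 2 avoids it: the only uniformity required of $v$ is over the $p$ scalar dilations in Step 1.
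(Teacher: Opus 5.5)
Your proof is correct in outline but takes a genuinely different route from the paper's. The paper takes $v$ uniformly random in $V_0$ and uses Hoeffding plus a union bound over $u \in \F_p^\times$ to get $|\lambda_v(u)| \le 1/2$ for all support-one vectors. It then handles every nonconstant $w$ by a Cauchy--Schwarz switching with a \emph{single} transposition $\tau=(1,2)$, which gives $|\lambda_{v,w}|^2 \le \frac12 + \frac12|\lambda_v(w_1-w_2)|^2$. You instead build structure into $v$, so that averaging over the group $C_2^m$ generated by $m$ disjoint transpositions factorizes \emph{exactly} into a product of cosines. You then replace the Cauchy--Schwarz step by a block-symmetrization argument on the induced random matching. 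Both proofs rest on the same observation: the union bound is only over $p-1$ scalars.

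The paper's version is shorter, needs no structure on $v$, and gives a far better constant ($\sqrt{5/8}$). It is also compatible with the Remark's claim that $\eps \to 1$. Your $v$ has about half its coordinates equal, so $|\lambda_{v,w}| \approx 1/2$ for support-one $w$, and $\eps$ is capped near $1/2$. In exchange, your factorization loses nothing to Cauchy--Schwarz. It also gives exponentially small bounds for $w$ that produce many straddling blocks.

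A few slips need repair, none serious.
\begin{enumerate}
\item Your $c=(\sum_k d_k)/n$ is undefined when $p \mid n$ (e.g.\ $p=2$, $n$ even). In that case $\one \in V_0$, and shifting by $\one$ cannot fix the coordinate sum. Instead set $v_{2m} = -\sum_k d_k$, and bound the $m$-th block factor trivially by $1$ (the $h$-average still factorizes blockwise). Run Step 1 on $d_1,\dots,d_{m-1}$. This lowers the conditional success probability from $1/2$ to at least $(m-1)/(2m) \ge 1/4$.
\item Your case (i) is misstated. Two values each occurring fewer than $n/2$ times does not make a random pair straddle with constant probability; for example, $w=(a,b,c,\dots,c)$ gives straddling probability $O(1/n)$. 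No case split is needed. Choose a coordinate $i$ whose value class has size at most $n/2$. It is matched with probability at least $1-1/n$, and its partner is then uniform on the other $n-1$ coordinates. So the partner differs from $w_i$ with probability at least $1/2$, giving $\Pr(K(\pi)\ne\emptyset)\ge 1/3$ for $n\ge 3$.
\item For $p=2$ the bad probability is exactly $1/2$, so Chernoff at threshold $m/2$ gives no decay. But there is only one dilation, and taking $d_k=1$ for all $k$ works.
\end{enumerate}
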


In fact we show that a random $v \in V_0$ works.
When $p$ is bounded, this can be proved by a straightforward modification of the analysis in \cite{ALW01}.
The main novelty in our result is that $p$ is allowed to be enormous:
in technical terms, the reason we can take $p$ exponential in $n$ is that we do not need a union bound over all vectors,
but only over support-one vectors, and we can then use a deterministic switching argument based on the Cauchy--Schwarz inequality to deal with all other vectors.

\begin{remark}
    We have chosen to present the simplest argument that answers Question~\ref{quesPS}, but
    a more elaborate analysis shows that Theorems~\ref{thMain} and \ref{thExpSum} are true for any constant $\delta > 0$ and some suitable $\eps = \eps(\delta) > 0$.
    Moreover, $\eps(\delta)$ can be taken to be a continuous monotonic function such that $\eps(\delta) \to 1$ as $\delta \to 0$.
    On the other hand, a standard ``rectification'' argument in additive combinatorics (see \cite{BLR98}*{Section~3} for example) shows that necessarily $\eps(\delta) \to 0$ as $\delta \to \infty$ in Theorem~\ref{thExpSum}.
\end{remark}

\section{Preliminaries} \label{sec2}

If $\Gamma$ is a $d$-regular graph,
then $\gap(\Gamma)$ denotes the spectral gap of the normalized adjacency matrix.
It is well-known that $\gap(\Gamma) > 0$ if and only if $\Gamma$ is connected,
and we say $\Gamma$ is an \emph{($\eps$-)expander} if $\gap(\Gamma) \ge \eps$ (often $\eps$ is implicit).
For an expander graph, the diameter is logarithmic in the number of vertices.

If $G$ is a finite group and $S \subseteq G$, the Cayley graph $\Cay(G, S)$ is the undirected graph with vertex set $G$ and edges $(g, gs^{\pm1})$ for $g \in G$ and $s \in S$.
We allow multiple edges, so $\Cay(G, S)$ is always a $d$-regular graph for $d = 2|S|$.
We write $\gap(G, S)$ for $\gap(\Cay(G, S))$, and if $\Cay(G, S)$ is an expander then we say that $G$ is an expander \emph{with respect to $S$}.

In practice, it is often technically convenient to argue in terms of the \emph{Kazhdan constant}.
This is defined by
\[
    \kaz(G, S) \> := \>
    \inf_{\substack{\pi \colon G \to U(V) \\ V^G = 0}}
    \inf_{\substack{\xi \in V \\ \|\xi\| = 1}}
    \max_{s \in S} \|\pi(s) \xi - \xi\| ,
\]
where the first infimum runs over all unitary representations $\pi \colon G \to U(V)$ with no nonzero $G$-invariants,
and the second infimum runs over all unit vectors $\xi \in V$.
The following basic properties of $\kaz(G,S)$ are mostly immediate from the definition.

\begin{lemma}
    \label{lem:kaz-basics}
    Let $G$ be a finite group and let $S, T \subseteq G$.
    \begin{enumerate}
        \item $\kaz(G, S) \le \kaz(G, T)$ if $S \subseteq T$;
        \item $\kaz(G, S) \le 2$;
        \item $\kaz(G, G) \ge \sqrt 2$;
        \item $\kaz(G, S) \ge \frac1n \kaz(G, S^n)$.
    \end{enumerate}
\end{lemma}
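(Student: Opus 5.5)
We take the four items in turn, working directly from the definition of $\kaz(G,S)$ as an infimum over pairs $(\pi,\xi)$, where $\pi$ has no nonzero invariants and $\xi$ is a unit vector, of $\max_{s\in S}\|\pi(s)\xi-\xi\|$.

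For (1), fix any admissible pair $(\pi,\xi)$. If $S\subseteq T$, the maximum over $S$ is at most the maximum over $T$. Taking infima preserves this inequality.

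For (2), note that $\pi(s)$ is unitary and $\|\xi\|=1$, so $\|\pi(s)\xi-\xi\|\le \|\pi(s)\xi\|+\|\xi\|=2$ for every $s$. Hence the quantity is at most $2$ for every pair. One degenerate point should be stated explicitly: if $G$ is trivial, there is no representation with $V^G=0$ and $V\ne0$, so the infimum is over the empty set. We adopt the convention that in this case the bound is vacuous, or that the infimum equals $+\infty$. For nontrivial $G$, a nontrivial irreducible representation exists, so the infimum is genuinely taken and is $\le 2$.

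For (3), take an admissible pair $(\pi,\xi)$ and expand
\[
\|\pi(g)\xi-\xi\|^2 = 2-2\Re\langle \pi(g)\xi,\xi\rangle .
\]
Averaging over $g\in G$ gives $\langle P\xi,\xi\rangle$, where $P=\frac1{|G|}\sum_g\pi(g)$ is the orthogonal projection onto $V^G$. Since $V^G=0$, we have $P=0$, so the average of $\|\pi(g)\xi-\xi\|^2$ over $G$ is exactly $2$. Therefore the maximum over $g\in G$ is at least $2$, and $\max_{g\in G}\|\pi(g)\xi-\xi\|\ge\sqrt2$. Taking the infimum gives (3).

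For (4), use the triangle inequality together with unitarity. Let $g=s_1\cdots s_m$ with $m\le n$ and each $s_i\in S$. Telescoping gives
\[
\|\pi(g)\xi-\xi\|\le\sum_{i=1}^m\|\pi(s_1\cdots s_{i})\xi-\pi(s_1\cdots s_{i-1})\xi\| = \sum_{i=1}^m\|\pi(s_i)\xi-\xi\| \le n\max_{s\in S}\|\pi(s)\xi-\xi\|,
\]
where the middle equality uses that $\pi(s_1\cdots s_{i-1})$ is unitary. If $S^n$ is understood to include words of length less than $n$, or the identity, these have fewer terms and the same bound holds. Taking the maximum over $g\in S^n$ and then infima gives $\kaz(G,S^n)\le n\,\kaz(G,S)$, which is (4).

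None of these steps is a real obstacle. The only points needing care are the empty-infimum convention for trivial $G$ in (2), and the identification of the group average of $\pi$ with the projection onto $V^G$ in (3).
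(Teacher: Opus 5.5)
Your proof is correct and follows the paper's approach: items (1)--(2) are the immediate observations the paper calls clear, and for (3)--(4), which the paper only cites from Kassabov's Propositions~1.3--1.4, you supply the standard self-contained arguments (averaging $\pi$ over $G$, which gives $0$ on $\xi$ because $V^G=0$, and telescoping a product of generators using unitarity). Your remark that (2) needs $G$ nontrivial, since otherwise the infimum is over an empty set, is a fair point that the paper passes over.
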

\begin{proof}
    (1)--(2) are clear. For (3)--(4) see \cite{Kas07}*{Propositions~1.3--1.4}.
\end{proof}

It is well known that the spectral gap and the Kazhdan constant are related by the following inequalities.

\begin{lemma}
    \label{lemKazSG}
    Let $G$ be a finite group and $S \subseteq G$.
    Then
    \[
        \frac{\kaz(G,S)^2}{2|S|}
        \> \le \> \gap(G,S)
        \> \le \> \frac{\kaz(G,S)^2}{2}.
    \]
\end{lemma}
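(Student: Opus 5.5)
We prove the two inequalities separately, using the standard spectral description of $\Cay(G,S)$. Let $A = \frac{1}{2|S|}\sum_{s\in S}(\rho(s)+\rho(s)^{-1})$ be the normalized adjacency operator, where $\rho$ is the right regular representation on $\ell^2(G)$. Then $\gap(G,S) = 1 - \lambda_2$, with $\lambda_2$ the largest eigenvalue of $A$ on $\ell^2_0(G) = \one^\perp$. For any unit $\xi$ we have the basic identity
\[
\gen{(I-A)\xi,\xi} \> = \> \frac{1}{2|S|}\sum_{s\in S}\br{2 - 2\Re\gen{\rho(s)\xi,\xi}} \> = \> \frac{1}{2|S|}\sum_{s\in S}\|\rho(s)\xi-\xi\|^2 .
\]
Here we used that $\gen{\rho(s)^{-1}\xi,\xi} = \overline{\gen{\rho(s)\xi,\xi}}$. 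Hence
\[
\gap(G,S) \> = \> \inf_{\xi\in\ell^2_0,\ \|\xi\|=1}\frac{1}{2|S|}\sum_{s\in S}\|\rho(s)\xi-\xi\|^2 .
\]

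\textbf{Upper bound.} The regular representation restricted to $\ell^2_0(G)$ has no nonzero invariants. It is therefore admissible in the definition of $\kaz(G,S)$, although it may not attain the infimum over all representations. We handle this by reducing to irreducibles. Any unitary representation of the finite group $G$ without invariants decomposes into nontrivial irreducibles, and each of these occurs inside $\ell^2_0(G)$. The quantity $\max_s\|\pi(s)\xi-\xi\|$ is not additive over direct sums. Still, for $\xi=\sum_j\xi_j$ split along an orthogonal decomposition into subrepresentations, we have $\|\pi(s)\xi-\xi\|^2=\sum_j\|\pi(s)\xi_j-\xi_j\|^2$. From this one checks that the infimum defining $\kaz$ can be approached within a single isotypic piece, which embeds in $\ell^2_0(G)$. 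Now take $\xi\in\ell^2_0(G)$ nearly achieving $\kaz$, so that $\max_s\|\rho(s)\xi-\xi\|\le\kaz+o(1)$. Then
\[
\gap \> \le \> \frac{1}{2|S|}\sum_{s\in S}\|\rho(s)\xi-\xi\|^2 \> \le \> \frac{1}{2}\max_{s\in S}\|\rho(s)\xi-\xi\|^2 \> \le \> \frac{\kaz^2}{2} .
\]
This reduction to subrepresentations of $\ell^2_0(G)$ is the one step that needs care. I would do it cleanly by observing that a vector in an isotypic component $W^{\oplus m}$ generates a cyclic subrepresentation isomorphic to a subrepresentation of $\ell^2(G)$, since every cyclic representation of $G$ is a quotient, hence a summand, of the regular representation. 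That subrepresentation has no invariants, so it lies inside $\ell^2_0(G)$.

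\textbf{Lower bound.} Take the eigenvector $\xi\in\ell^2_0(G)$ for $\lambda_2$ with $\|\xi\|=1$. By the definition of $\kaz$ applied to $\rho$ on $\ell^2_0(G)$, some $s\in S$ satisfies $\|\rho(s)\xi-\xi\|\ge\kaz(G,S)$. Dropping all other terms from the nonnegative sum gives
\[
\gap \> = \> \frac{1}{2|S|}\sum_{s\in S}\|\rho(s)\xi-\xi\|^2 \> \ge \> \frac{\kaz^2}{2|S|} .
\]
The main (mild) obstacle is thus the representation-theoretic reduction in the upper bound. Everything else is the quadratic-form identity above.
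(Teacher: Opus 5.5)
Your proof is correct in outline and, unlike the paper, self-contained: the paper gives no argument and simply cites de la Harpe--Robertson--Valette (Proposition~III). Your quadratic-form identity $\gen{(I-A)\xi,\xi}=\frac{1}{2|S|}\sum_{s\in S}\|\rho(s)\xi-\xi\|^2$ gives the lower bound immediately, since $\rho$ on $\ell^2_0(G)$ is admissible in the definition of $\kaz$. It also reduces the upper bound to the one genuinely representation-theoretic fact: for a finite group, the infimum defining $\kaz(G,S)$ is already attained on $\ell^2_0(G)$. That is worth making explicit.

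There is, however, one false sentence in your reduction. The infimum defining $\kaz$ can \emph{not} in general be approached within a single isotypic component, because $\max_{s\in S}$ does not commute with orthogonal sums. For example, let $G=\gen{a}\times\gen{b}\cong C_2\times C_2$ and $S=\{a,b\}$. A unit vector in the isotypic component of any nontrivial character is moved by distance $2$ by $a$ or by $b$. But if $\xi_1,\xi_2$ are unit vectors in the characters that are nontrivial only on $a$, resp.\ only on $b$, then $\xi=(\xi_1+\xi_2)/\sqrt2$ has $\|a\xi-\xi\|=\|b\xi-\xi\|=\sqrt2$. Moreover, an isotypic component $W^{\oplus m}$ with $m>\dim W$ does not embed in $\ell^2(G)$ at all.

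Fortunately the step is unnecessary, because your cyclic-subrepresentation observation works for an \emph{arbitrary} unit vector $\eta$ in an arbitrary $\pi$ with no invariants. The span $W$ of $\pi(G)\eta$ is a quotient of the regular representation. Hence it is $G$-isomorphic to the orthogonal complement $W'$ of the kernel in $\ell^2(G)$. Replace the intertwiner $W'\to W$ by the unitary factor of its polar decomposition; this gives an \emph{isometric} intertwiner. You need isometry so that the displacements $\|\pi(s)\eta-\eta\|$ are preserved. Finally, $W'\perp\one$ because $W$ has no invariants. Hence $\kaz(G,S)=\min_{\xi\in\ell^2_0(G),\,\|\xi\|=1}\max_{s\in S}\|\rho(s)\xi-\xi\|$, and your chain of inequalities gives $\gap(G,S)\le\kaz(G,S)^2/2$. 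With the isotypic sentence deleted, the proof is complete.
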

\begin{proof}
    See \cite{dHRV93}*{Proposition~III}.
\end{proof}

In particular, if $S$ has bounded cardinality,
then $\gap(G, S)$ is bounded away from zero if and only if $\kaz(G, S)$ is so.

\vspace{0.1cm}
\section{Expansion in semidirect products}

Alon--Lubotzky--Wigderson~\cite{ALW01} analyze the expansion of semidirect products by identifying them as a special case of zig-zag products. Let us take the opportunity to explain a more direct approach based on Kazhdan constants.

\subsection{Kazhdan constant and semidirect products}

Let $\pi \colon G \to U(V)$ be a unitary $G$-representation, $S \subseteq G$, and $\eps > 0$.
We say that $\xi \in V$ is \emph{$\eps$-almost $S$-invariant} if
\(
    \| \pi(s)\xi - \xi \| \> < \> \eps \| \xi \|
\)
for all $s \in S$.
Then $\kaz(G, S)$ is the largest constant $\eps \ge 0$ with the property that every unitary $G$-representation with an $\eps$-almost $S$-invariant vector has a nonzero $G$-invariant vector.

\begin{lemma} \label{lemSean1}
    Let $\pi \colon G \to U(V)$ and let $\xi \in V$ be $\eps$-almost $S$-invariant.
    Assume $\kaz(G, S) > 0$ and let $\eps' = \eps / \kaz(G, S)$.
    Then
    \begin{enumerate}[(i)]
        \item there is a $G$-invariant vector $\xi_1 \in V$ such that $\|\xi - \xi_1\| < \eps'\|\xi\|$;
        \item $\xi$ is $2\eps'$-almost $G$-invariant.
    \end{enumerate}
\end{lemma}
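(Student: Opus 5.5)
Since $\pi$ is a unitary representation of the finite group $G$, we can split $V = V^G \oplus (V^G)^\perp$ orthogonally. Both summands are $G$-invariant subspaces, because the orthogonal complement of an invariant subspace is invariant under a unitary action. Write $\xi = \xi_1 + \xi_0$ with $\xi_1 \in V^G$ and $\xi_0 \in (V^G)^\perp$. Then $\xi_1$ is the $G$-invariant vector we want in (i), and it suffices to show $\|\xi_0\| < \eps'\|\xi\|$.

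For each $s \in S$ we have $\pi(s)\xi - \xi = \pi(s)\xi_0 - \xi_0$, since $\xi_1$ is fixed. Suppose $\xi_0 \ne 0$. The restriction of $\pi$ to $(V^G)^\perp$ is a unitary representation with no nonzero $G$-invariants. By the definition of $\kaz(G,S)$ there is some $s \in S$ with
\[
\|\pi(s)\xi_0 - \xi_0\| \ge \kaz(G,S)\,\|\xi_0\|.
\]
Combining this with the almost-invariance hypothesis gives
\[
\kaz(G,S)\|\xi_0\| \le \|\pi(s)\xi - \xi\| < \eps\|\xi\|,
\]
so $\|\xi_0\| < (\eps/\kaz(G,S))\|\xi\| = \eps'\|\xi\|$. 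If instead $\xi_0 = 0$, the strict inequality $0 < \eps'\|\xi\|$ holds whenever $\xi \neq 0$. When $\xi = 0$ the statement is vacuous: strict almost-invariance fails, so that case does not arise. This proves (i).

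For (ii), take any $g \in G$. Since $\pi(g)\xi_1 = \xi_1$ and $\pi(g)$ is unitary,
\[
\|\pi(g)\xi - \xi\| = \|\pi(g)\xi_0 - \xi_0\| \le 2\|\xi_0\| < 2\eps'\|\xi\|.
\]
Hence $\xi$ is $2\eps'$-almost $G$-invariant.

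None of this is a serious obstacle. The one point needing care is the claim that $(V^G)^\perp$ has no nonzero invariants and is $G$-stable, so that the definition of $\kaz$ applies to the subrepresentation. This follows from unitarity: if $\eta \perp V^G$ and $u \in V^G$, then $\langle \pi(g)\eta, u\rangle = \langle \eta, \pi(g)^{-1}u\rangle = \langle \eta, u\rangle = 0$. Any $G$-invariant vector in $(V^G)^\perp$ would also lie in $V^G$, so it must be $0$.
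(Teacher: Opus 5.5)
Your proposal is correct and follows the same route as the paper: orthogonally decompose $\xi$ along $V^G \oplus (V^G)^\perp$, apply the definition of $\kaz(G,S)$ to the component in $(V^G)^\perp$ to get (i), and bound $\|\pi(g)\xi-\xi\|$ by $2\|\xi_0\|$ for (ii). The extra checks you include ($G$-stability of $(V^G)^\perp$ and the degenerate cases $\xi_0=0$, $\xi=0$) are details the paper leaves implicit.
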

\begin{proof}
    \emph{(i)}
    Orthogonally decompose $\xi = \xi_1 + \xi_2$ where $\xi_1 \in V^G$ and $\xi_2 \in (V^G)^\perp$.
    Since $((V^G)^\perp)^G = 0$, there is some $s \in S$ such that
    \[
        \kaz(G, S) \|\xi_2\| \le \|\pi(s) \xi_2 - \xi_2\| = \|\pi(s) \xi - \xi\| < \eps \|\xi\|.
    \]
    Hence $\|\xi_2\| < \eps' \|\xi\|$.

    \emph{(ii)}
    For $g \in G$, $\|\pi(g) \xi - \xi\| = \|\pi(g) \xi_2 - \xi_2\| \le 2 \|\xi_2\| < 2\eps' \|\xi\|.$
\end{proof}

\begin{lemma} \label{lemSean2}
    If $H \sgp G$ then
    \(
        \kaz(G, S) \ge \frac12 \kaz(G, S \cup H) \, \kaz(H, S \cap H).
    \)
\end{lemma}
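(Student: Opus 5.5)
We argue by contraposition, using the characterization of $\kaz$ recalled just above the statement. Write $\kappa_1 = \kaz(G, S \cup H)$ and $\kappa_2 = \kaz(H, S \cap H)$; if either is zero there is nothing to prove. Put $\eps = \frac12\kappa_1\kappa_2$. Let $\pi \colon G \to U(V)$ be a unitary representation with $V^G = 0$ and let $\xi \in V$ be a unit vector. The aim is to find some $s \in S$ with $\|\pi(s)\xi - \xi\| \ge \eps$. Suppose instead that $\|\pi(s)\xi - \xi\| < \eps$ for every $s \in S$; we will derive a contradiction.

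\emph{Step 1: use $\kappa_2$ to make $\xi$ almost invariant under $H$.} Restrict $\pi$ to $H$. Then $\xi$ is $\eps$-almost $(S\cap H)$-invariant for $\pi|_H$. Apply Lemma~\ref{lemSean1}(ii) to the group $H$ with generating set $S \cap H$. This gives that $\xi$ is $2\eps/\kappa_2$-almost $H$-invariant, and $2\eps/\kappa_2 = \kappa_1$. So $\|\pi(h)\xi - \xi\| < \kappa_1$ for all $h \in H$.

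Lemma~\ref{lemSean1} does not require the representation to have no invariant vectors. It only uses the decomposition into $V^H$ and its orthogonal complement, so applying it to $\pi|_H$ is legitimate.

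\emph{Step 2: combine.} For $s \in S$ we have $\|\pi(s)\xi-\xi\| < \eps \le \kappa_1$, using $\kappa_2 \le 2$ from Lemma~\ref{lem:kaz-basics}(2). Together with Step 1, $\xi$ is $\kappa_1$-almost $(S\cup H)$-invariant. But $V^G = 0$, so by the definition of $\kappa_1 = \kaz(G,S\cup H)$ there is some $t \in S \cup H$ with $\|\pi(t)\xi-\xi\| \ge \kappa_1$. This is a contradiction.

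The only delicate point is the constant bookkeeping: the factor $2$ from Lemma~\ref{lemSean1}(ii) is exactly what forces the $\frac12$ in the statement. The infimum in the definition of $\kaz$ is attained, since it is a minimum over finitely many irreducible representations. So the strict and non-strict inequalities cause no trouble.
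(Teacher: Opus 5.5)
Your proof is correct and follows the paper's argument: restrict to $H$, apply Lemma~\ref{lemSean1}(ii), absorb $S$ using $\kaz(H,S\cap H)\le 2$, then invoke the definition of $\kaz(G,S\cup H)$; the only difference is that you phrase it as a contradiction with $\eps=\frac12\kappa_1\kappa_2$ fixed in advance. Your closing remark on attainment is unnecessary, since you only use $\kappa_1\le\max_{t\in S\cup H}\|\pi(t)\xi-\xi\|$ for the given $\pi,\xi$, and its justification is inaccurate, because Kazhdan constants are not in general determined by irreducible representations alone (direct sums can give strictly smaller values).
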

\begin{proof}
    Let $\pi \colon G \to U(V)$ be a unitary representation with $V^G = 0$ and suppose $\xi \in V$ is $\eps$-almost $S$-invariant vector $\xi$, where $\eps > 0$.
    Then in particular $\xi$ is $\eps$-almost $(S \cap H)$-invariant.
    By Lemma~\ref{lemSean1}(b), $\xi$ is $2\eps'$-almost $H$-invariant, where $\eps' = \eps / \kaz(H, S \cap H)$.
    Since $\kaz(H, S \cap H) \le 2$, $\eps \le 2\eps'$, so $\xi$ is $2\eps'$-almost $(S \cup H)$-invariant.
    Since $V^G = 0$, it follows that $2\eps' \ge \kaz(G, S \cup H)$, i.e., $\eps \ge \frac12 \kaz(G, S \cup H) \kaz(H, S \cap H)$.
\end{proof}

\begin{lemma} \label{lemSean3}
    If $N \nsgp G$, then
    \(
        \kaz(G, S \cup N) \ge \frac14 \kaz(G/N, SN/N).
    \)
\end{lemma}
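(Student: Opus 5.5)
The plan is to argue directly from the definition of $\kaz$: we take an arbitrary unitary representation $\pi \colon G \to U(V)$ with $V^G = 0$ and a unit vector $\xi \in V$, and show that $\max_{s \in S \cup N} \|\pi(s)\xi - \xi\| \ge \frac14 \kaz(G/N, SN/N)$. Write $\eps := \max_{s \in S \cup N}\|\pi(s)\xi - \xi\|$ and $\kaz' := \kaz(G/N, SN/N)$. The idea is to project $\xi$ onto the $N$-invariant vectors, which form a representation of $G/N$, and to apply the definition of $\kaz'$ there.

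\emph{Step 1: the subspace $V^N$.} Since $N \nsgp G$, the subspace $V^N$ is $G$-invariant: for $v \in V^N$, $g \in G$ and $n \in N$ we have $\pi(n)\pi(g)v = \pi(g)\pi(g^{-1}ng)v = \pi(g)v$. Hence $V^N$ is a unitary representation of $G/N$. Its $G/N$-invariant vectors are exactly the $G$-invariant vectors of $V$, so $(V^N)^{G/N} = V^G = 0$. Let $P$ be the orthogonal projection onto $V^N$. Because $G$ is finite, $P = \frac{1}{|N|}\sum_{n \in N}\pi(n)$. Moreover $P$ commutes with $\pi(g)$ for all $g \in G$, since $V^N$ and its orthogonal complement are both $G$-invariant.

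\emph{Step 2: estimates for $P\xi$.} The averaging formula gives
\[
\|\xi - P\xi\| \le \frac{1}{|N|}\sum_{n \in N}\|\xi - \pi(n)\xi\| \le \eps ,
\]
so $\|P\xi\| \ge 1 - \eps$. For $s \in S$ we use that $P$ commutes with $\pi(s)$ and has norm at most $1$:
\[
\|\pi(s)P\xi - P\xi\| = \|P(\pi(s)\xi - \xi)\| \le \eps .
\]

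\emph{Step 3: conclusion by cases.} If $\eps \ge \frac12$, then $\eps \ge \frac14 \kaz'$, because $\kaz' \le 2$ by Lemma~\ref{lem:kaz-basics}(2). If $\eps < \frac12$, then $\|P\xi\| > \frac12$, so $P\xi \neq 0$. Step 2 then gives, for every $s \in S$,
\[
\|\pi(s)P\xi - P\xi\| \le \eps < 2\eps\,\|P\xi\| .
\]
Thus $P\xi$ is a nonzero vector in a $G/N$-representation without invariant vectors satisfying $\max_{s}\|\pi(s)P\xi - P\xi\| \le 2\eps\|P\xi\|$. The definition of $\kaz'$ then forces $2\eps \ge \kaz'$, so $\eps \ge \frac12\kaz' \ge \frac14\kaz'$. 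Taking the infimum over $\pi$ and $\xi$ gives the lemma.

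There is no real obstacle here. The only points needing care are the normality of $N$, which makes $V^N$ a $G$-subrepresentation commuting with $P$, and the case split at $\eps = \frac12$, which is where the constant $\frac14$ comes from.
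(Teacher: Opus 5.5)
Your proof is correct and follows the same strategy as the paper: project $\xi$ onto $V^N$, regard $V^N$ as a $G/N$-representation with $(V^N)^{G/N}=V^G=0$, and apply the definition of $\kaz(G/N,SN/N)$ to the projection. The differences are only in the estimates. You bound $\|\xi-P\xi\|\le\eps$ by direct averaging rather than via Lemma~\ref{lemSean1}(i) and the cited fact $\kaz(N,N)\ge\sqrt2$, and you use normality to get $P\pi(s)=\pi(s)P$ instead of the paper's triangle inequality $\|\pi(s)\xi_1-\xi_1\|\le\|\pi(s)\xi-\xi\|+2\|\xi-\xi_1\|$. This makes your argument self-contained and even gives the slightly stronger bound $\kaz(G,S\cup N)\ge\min\{\frac12,\frac12\kaz(G/N,SN/N)\}$.
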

\begin{proof}
    Let $\pi \colon G \to U(V)$ be a unitary representation with $V^G = 0$ and suppose $\xi \in V$ is $\eps$-almost $(S \cup N)$-invariant,
    where $\eps = \frac14 \kaz(G/N, SN/N) \le 1/2$.
    By Lemma~\ref{lemSean1}(a), there is some $\xi_1 \in V^N$ such that $\|\xi - \xi_1\| < \eps' \|\xi\|$, where $\eps' = \eps / \kaz(N, N) \le \eps / \sqrt 2$.
    This implies $\|\xi_1\| > (1 - \eps') \|\xi\|$.
    Now $V^N$ is a $G/N$-representation, and if $s \in S$ then
     \begin{align*}
        \|\pi(s) \xi_1 - \xi_1\|
        &\le \|\pi(s) \xi - \xi\| + 2 \|\xi - \xi_1\|  \\
        &<  (\eps + 2\eps') \|\xi\|
        <  \frac{\eps + 2\eps'}{1 - \eps'} \|\xi_1\|
        <  4\eps \|\xi_1\| .
    \end{align*}
    Therefore $\xi_1$ is $4\eps$-almost $(SN/N)$-invariant.
    Since $(V^N)^{G/N} = V^G = 0$ and $4 \eps = \kaz(G/N, SN/N)$, this is a contradiction.
\end{proof}

The following result can be used to replace \cite{RVW02}*{Theorem 6.4} in the
context of Cayley graphs.

\begin{proposition}\label{prop:semidirect}
    Let $G = N \rtimes H$, $S \subseteq N$ and $T \subseteq H$.
    Let $R = S \cup T \subseteq G$.
    Then
    \[
    \kaz(G, R)
    \> \ge \>
    \frac{\sqrt 2}{48} \, \kaz(N , S^H) \, \kaz(H , T),
    \]
    where $S^H = \{s^h : s \in S, h \in H\}$.
\end{proposition}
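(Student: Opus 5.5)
The plan is to combine Lemmas~\ref{lemSean2} and \ref{lemSean3} with a bounded-length word trick based on Lemma~\ref{lem:kaz-basics}(4). The target is $\kaz(G, S \cup T \cup H)$, after which Lemma~\ref{lemSean2} returns us to $R = S \cup T$.

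\emph{Step 1.} Apply Lemma~\ref{lemSean2} with the subgroup $H \sgp G$ and the set $R$. Since $R \cap H \supseteq T$, Lemma~\ref{lem:kaz-basics}(1) gives
\[
\kaz(G,R) \ge \tfrac12 \kaz(G, R\cup H)\,\kaz(H, R\cap H) \ge \tfrac12 \kaz(G, S\cup H)\,\kaz(H,T).
\]
So it suffices to show $\kaz(G, S \cup H) \ge \frac{\sqrt2}{24}\kaz(N, S^H)$.

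\emph{Step 2.} This step handles $\kaz(G, S\cup H)$. Every element $s^h = h^{-1} s h$ is a word of length $3$ in $S \cup H$ (using $H = H^{-1}$). Hence $S^H \subseteq (S\cup H)^3$, and also $H \subseteq (S \cup H)^3$ if we allow the identity, which lies in $H$. By Lemma~\ref{lem:kaz-basics}(4) and (1),
\[
\kaz(G, S\cup H) \ge \tfrac13 \kaz(G, S^H \cup H).
\]
Now apply Lemma~\ref{lemSean2} again, this time with the subgroup $N \sgp G$ and the set $S^H \cup H$. Since $(S^H\cup H)\cap N \supseteq S^H$,
\[
\kaz(G, S^H\cup H) \ge \tfrac12 \kaz(G, S^H \cup H \cup N)\,\kaz(N, S^H).
\]

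\emph{Step 3.} Since $N \nsgp G$ and $G/N \cong H$ with $HN/N = G/N$, Lemma~\ref{lemSean3} combined with Lemma~\ref{lem:kaz-basics}(3) gives
\[
\kaz(G, H\cup N) \ge \tfrac14 \kaz(G/N, G/N) \ge \tfrac{\sqrt2}{4}.
\]
The same bound holds for the larger set $S^H \cup H \cup N$ by Lemma~\ref{lem:kaz-basics}(1). Chaining the estimates gives
\[
\kaz(G,R) \ge \tfrac12\cdot\tfrac13\cdot\tfrac12\cdot\tfrac{\sqrt2}{4}\,\kaz(N,S^H)\,\kaz(H,T) = \tfrac{\sqrt2}{48}\,\kaz(N,S^H)\,\kaz(H,T),
\]
which matches the claimed constant exactly. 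This suggests the intended route is precisely this chain.

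The only delicate point is bookkeeping, namely confirming that Lemma~\ref{lem:kaz-basics}(4) applies with $S^n$ meaning all products of $n$ elements. To get $S^H \cup H \subseteq (S\cup H)^3$ we need $1 \in S\cup H$, which holds since $1 \in H$. So this step is fine, and the three lemmas compose with no loss beyond the stated factors.
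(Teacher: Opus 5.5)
Your proof is correct and is the paper's argument step for step. You apply Lemma~\ref{lemSean2} to $H$, use $S^H \cup H \subseteq (S\cup H)^3$ with Lemma~\ref{lem:kaz-basics}(1,4), then apply Lemmas~\ref{lemSean2} and \ref{lemSean3} to $N \nsgp G$ with $\kaz(G/N,G/N)\ge\sqrt2$, arriving at the same constant $\frac{\sqrt2}{48}$. Your extra bookkeeping ($R\cup H = S\cup H$, $R \cap H \supseteq T$, $(S^H\cup H)\cap N \supseteq S^H$, and $1\in H$) only makes explicit what the paper leaves implicit.
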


\begin{proof}
    By Lemma~\ref{lemSean2} applied to $H \sgp G$ we have
    \[
        \kaz(G, R) \geq \frac12 \kaz(G, S \cup H) \kaz(H, T).
    \]
    Now $S^H \cup H \subseteq (S \cup H)^3$, so by Lemma~\ref{lem:kaz-basics}(1,4) we have
    \[
        \kaz(G, S \cup H) \ge \frac13 \kaz(G, (S \cup H)^3) \ge \frac13 \kaz(G, S^H \cup H).
    \]
    Applying Lemmas~\ref{lemSean2} and \ref{lemSean3} to $N \nsgp G$,
    \begin{align*}
        \kaz(G, S^H \cup H)
        &\ge \frac12 \kaz(G, N \cup H) \kaz(N, S^H) \\
        &\ge \frac18 \kaz(G/N, G/N) \kaz(N, S^H) \ge \frac{\sqrt2}8 \kaz(N, S^H).
    \end{align*}
    This concludes the proof.
\end{proof}

\subsection{Reduction to the exponential sum estimate}

We now prove Theorem~\ref{thMain}(a) assuming Theorem~\ref{thExpSum} (which will be proved in the next section).
Let $V= \F_p^n$ and let $V_0 < V$ be the deleted permutation module of dimension $n-1$.
Let $G = V_0 \rtimes S_n$ and $v \in V_0$.
By Kassabov~\cite{Kas07} there is a bounded-size generating set $T \subseteq S_n$ such that $\kaz(S_n, T)$ is bounded away from zero.
If $X = \{v\} \cup T$,
then by Proposition~\ref{prop:semidirect} and Lemma~\ref{lemKazSG} we have
\begin{align*}
        \kaz(G, X)
        &\ge \frac{\sqrt 2}{48} \kaz(V_0, v^{S_n}) \kaz(S_n, T)
         \\
        &\ge \frac{1}{24} \gap(V_0, v^{S_n})^{1/2} \kaz(S_n, T).
    \end{align*}
We now focus on $\gap(V_0, v^{S_n})$.
Observe that $\Cay(V,v^{S_n})$ is the disjoint union of $p$ graphs all isomorphic to $\Cay(V_0,v^{S_n})$.
It follows that the eigenvalues of the adjacency operator of $\Cay(V_0, v^{S_n})$ are the same as for
$\Cay(V, v^{S_n})$, the only difference being their multiplicities.
Since $V$ is abelian, its regular representation is isomorphic to a direct sum of one-dimensional representations
indexed by the irreducible complex characters.
We can identify these characters with vectors $w \in V$, using the formula
\[
    \chi_w(u) = e_p(\gen{u, w}) \qquad (u,w \in V).
\]
(Recall that $e_p(x) := \exp(2 \pi i x / p)$.)
It follows that the eigenvalues of $\Cay(V,v^{S_n})$ and so of $\Cay(V_0,v^{S_n})$ are given by
\[
    \frac1{2|S_n|} \sum_{\sigma \in S_n} \br{\chi_w(v^\sigma) + \chi_w(-v^{\sigma})} \hspace{1cm} (w \in V) .
\]
These are precisely the real values $\{ \Re[\lambda_{v,w}] \}_{w \in V}$, where
\[
    \lambda_{v,w} \> := \>
    \frac1{n!} \sum_{\sigma \in S_n} e_p (\gen{v, w^\sigma}) .
\]
The trivial eigenvalues are associated to the constant vectors, and so
$\gap(V_0,v^{S_n}) = 1 - \max_{w \in V \setminus \gen \one} \Re[\lambda_{v,w}]$.
 Since $\Re[\lambda_{v,w}] \le |\lambda_{v,w}|$, the reduction follows.

\vspace{0.1cm}
\section{Permutational exponential sums}

In this section we use a probabilistic argument to prove Theorem~\ref{thExpSum}, which asserts that there are constants $\delta, \eps > 0$ with the following property. For every integer $n > 0$ and prime $p \le e^{\delta n}$ there is some vector $v \in V_0$ such that
\[
    \max_{w \in V \setminus \gen\one} |\lambda_{v, w}|
    \> \le \> 1 - \eps.
\]
The main challenge in proving this is that, if $p$ is large, there are far too many vectors $w \in V \setminus \gen \one$ for a na\"ive union bound to work (even if we just consider sorted vectors).

For $v = (v_1,\ldots,v_n) \in V_0$ and $u \in \F_p$,
let $\lambda_v(u) = \lambda_{v,(u,0,\ldots,0)}$ and observe that
\[
    \lambda_v(u) \> = \> \frac1n \sum_{i=1}^n e_p( u v_i).
\]

\begin{lemma}
    \label{lem:tail-bound}
    Let $\eps \ge 2/n$ and assume $0 \ne u \in \F_p$.
    If $v \in V_0$ is uniformly random, then
    \[
        \Pr(|\lambda_v(u)| \ge \varepsilon) \> \le \>
        4 \exp(-\varepsilon^2 n / 8).
    \]
    As a consequence,
    \[
        \Pr\br{\max_{0 \ne u \in \F_p} |\lambda_v(u)| \ge \varepsilon}
        \> \le \> 4 p \exp\br{-\varepsilon^2 n / 8}.
    \]
\end{lemma}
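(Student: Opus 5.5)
The plan is to reduce to independent coordinates and apply Hoeffding's inequality, controlling the effect of the constraint $\sum_i v_i = 0$ by hand. A uniformly random $v \in V_0$ can be generated by taking $v_1,\dots,v_{n-1}$ independent and uniform in $\F_p$ and setting $v_n = -(v_1+\dots+v_{n-1})$. Then
\[
    n\lambda_v(u) = \sum_{i=1}^{n-1} e_p(uv_i) + e_p(uv_n),
\]
and the last term has modulus $1$. Hence $|\lambda_v(u)| \ge \eps$ forces
\[
    \Bigl|\sum_{i=1}^{n-1} e_p(uv_i)\Bigr| \ge \eps n - 1 \ge \eps n/2 ,
\]
where the last step uses the hypothesis $\eps \ge 2/n$. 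This is the only place that hypothesis is needed.

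Since $u \ne 0$, each $uv_i$ is uniform on $\F_p$. So the summands $Z_i = e_p(uv_i)$ are i.i.d., have mean $0$ (a full sum of $p$th roots of unity), and satisfy $|Z_i| = 1$. Split into real and imaginary parts. If $|\sum Z_i| \ge \eps n/2$, then either $|\sum \Re Z_i| \ge \eps n/(2\sqrt2)$ or $|\sum \Im Z_i| \ge \eps n/(2\sqrt2)$. Each of $\Re Z_i$ and $\Im Z_i$ is mean zero and takes values in $[-1,1]$.

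Apply the two-sided Hoeffding inequality with $m = n-1$ summands and threshold $t = \eps n/(2\sqrt 2)$:
\[
    \Pr\Bigl(\Bigl|\sum \Re Z_i\Bigr| \ge t\Bigr) \le 2\exp\br{-2t^2/(4m)} = 2\exp\br{-t^2/(2m)} .
\]
Here $t^2/(2m) = \eps^2 n^2/(16(n-1)) \ge \eps^2 n/16$. This gives $2\exp(-\eps^2 n/16)$ per part, which is weaker than the claimed exponent $\eps^2 n/8$.

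To recover the constant $8$, I would avoid the crude $\sqrt2$ split. Instead I would use a sharper bound on the complex sum, for instance a Hoeffding/Azuma bound for bounded martingale differences in $\mathbb{R}^2$. Alternatively, I would use the fact that $\Re Z_i$ and $\Im Z_i$ satisfy $\mathbb{E}(\Re Z_i)^2 = \mathbb{E}(\Im Z_i)^2 = 1/2$ when $p > 2$, and apply a Bernstein-type or sub-Gaussian moment generating function bound $\mathbb{E} e^{s\Re Z_i} \le e^{s^2/4}$. That gives
\[
    \Pr\Bigl(\Bigl|\sum \Re Z_i\Bigr| \ge t\Bigr) \le 2e^{-t^2/m}, \qquad t^2/m \ge \eps^2 n/8 .
\]
Summing the real and imaginary contributions then yields the constant $4$.

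The fiddly step is exactly this constant-tracking in the tail bound. The case $p=2$, where $Z_i = \pm1$ is real, should be handled directly. Any bound of the form $C e^{-c\eps^2 n}$ suffices for the application, so I would not worry unduly if the sharp constant needed a slightly different route. The second assertion then follows from a union bound over the $p-1 < p$ nonzero values of $u$.
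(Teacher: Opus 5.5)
Your reduction is the same as the paper's. You sample $v_1,\dots,v_{n-1}$ independently, absorb the single term $e_p(uv_n)$ using $\eps\ge 2/n$, apply Hoeffding to $\Sigma=\sum_{i\le n-1}e_p(uv_i)$, and take a union bound over $u$. Your arithmetic is also right: at threshold $\eps n/2$, the real/imaginary split with Hoeffding gives only $4\exp(-\eps^2 n^2/(16(n-1)))$. (The paper's one-line appeal to Hoeffding at that same threshold passes over this factor of $2$.)

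The gap is your proposed upgrade to the constant $8$. The bound $\mathbb{E}e^{s\Re Z_i}\le e^{s^2/4}$ is false for $p=3$. There $\Re Z_i$ equals $1$ with probability $1/3$ and $-1/2$ with probability $2/3$, so
\[
\mathbb{E}e^{s\Re Z_i}=\tfrac13\br{e^{s}+2e^{-s/2}}=1+\tfrac{s^2}{4}+\tfrac{s^3}{24}+O(s^4),
\]
which exceeds $e^{s^2/4}=1+s^2/4+O(s^4)$ for small $s>0$. Matching the variance cannot give a sub-Gaussian bound with proxy equal to the variance when the third moment is nonzero. For $p\ge5$ the inequality would still need proof. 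Your other suggestion, a vector-valued Azuma bound, can be made to work by quoting Pinelis's inequality for Hilbert-space martingales, which gives $\Pr(|\Sigma|\ge r)\le 2\exp(-r^2/(2(n-1)))$. However, you did not commit to it, so as written the proposal proves the lemma only with $\eps^2 n/16$ in the exponent.

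The simplest repair needs no new inequality: do not discard the slack in $\eps n-1\ge \eps n/2$. Assume $\eps\le 1$ (otherwise the probability is $0$, since $|\lambda_v(u)|\le1$), so $n\ge2$. The split and Hoeffding at threshold $\eps n-1\ge1$ give
\[
\Pr\br{|\lambda_v(u)|\ge\eps}\le 4\exp\br{-\frac{(\eps n-1)^2}{4(n-1)}}\le 4\exp\br{-\frac{(\eps n-1)^2}{4n}}.
\]
Moreover, $(\eps n-1)^2/(4n)\ge(\eps n)^2/(8n)=\eps^2n/8$ whenever $2(\eps n-1)^2\ge(\eps n)^2$, i.e.\ whenever $\eps n\ge 2+\sqrt2$. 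If instead $2\le\eps n<2+\sqrt2$, then $\eps^2n/8\le \eps n/8<(2+\sqrt2)/8<\log 4$. In that case the right-hand side $4\exp(-\eps^2n/8)$ exceeds $1$, and there is nothing to prove. This argument works uniformly in $p$, including $p=2$ and $p=3$, and the union bound over $u$ then finishes exactly as you say.
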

\begin{proof}
   The random variables $(e_p(u v_i))_{i=1}^{n-1}$ are independent uniformly random complex $p$th roots of unity.
    If \(\Sigma = \sum_{i=1}^{n-1} e_p(uv_i)\),
    by Hoeffding's inequality \cite{Hoe63}*{Theorem~2} we have
    \[
        \Pr\br{|\Sigma| \ge \eps n / 2} \le 4 \exp(-\eps^2 n^2 / 8 (n-1)) \le 4 \exp(-\eps^2 n / 8).
    \]
    On the other hand if $|\Sigma| \le \eps n/2$ then by the triangle inequality
    \[
        |\lambda_v(u)|
        \le \eps/2 + 1/n \le \eps.
    \]
    The second bound follows by the union bound.
\end{proof}

We now deduce a deterministic bound for all nonconstant $w \in V$.

\begin{lemma}
    \label{lem:switching}
    Let $v \in V_0$. We have
    \[
       \max_{w \in V \setminus \gen\one} |\lambda_{v, w}|^2
       \> \le \>
       \frac12 + \frac12 \max_{0 \ne u \in \F_p} |\lambda_v(u)|^2.
    \]
\end{lemma}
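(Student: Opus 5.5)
The plan is a deterministic ``switching'' argument. A nonconstant $w$ has two coordinates with different values, and the transposition swapping them splits $S_n$ into pairs. Averaging over these pairs and applying Cauchy--Schwarz should leave only a sum over pairs of coordinates of $v$, which can be written in terms of $\lambda_v(u)$.

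Fix $v \in V_0$ and a nonconstant $w \in V$, and choose indices $i \ne j$ with $w_i \ne w_j$; set $u = w_i - w_j \ne 0$. Let $\tau = (i\, j)$. Since $\sigma \mapsto \sigma\tau$ (with composition arranged so that $w^{\sigma\tau}$ is $w^\sigma$ with the entries $w_i$ and $w_j$ exchanged) is a bijection of $S_n$, we have
\[
    \lambda_{v,w} \> = \> \frac1{n!}\sum_{\sigma \in S_n} \frac{e_p(\gen{v,w^\sigma}) + e_p(\gen{v,w^{\sigma\tau}})}{2}.
\]
Let $a = a(\sigma)$ and $b = b(\sigma)$ be the positions where $w^\sigma$ carries the entries $w_i$ and $w_j$. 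Passing from $w^\sigma$ to $w^{\sigma\tau}$ changes the inner product by $(v_a - v_b)(w_j - w_i) = -u(v_a - v_b)$. Hence
\[
    \lambda_{v,w} \> = \> \frac1{n!}\sum_{\sigma} e_p(\gen{v,w^\sigma}) \cdot \frac{1 + e_p(-u(v_a - v_b))}{2}.
\]

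Applying Cauchy--Schwarz (the first factor has modulus $1$) gives
\[
    |\lambda_{v,w}|^2 \le \frac1{n!}\sum_\sigma \left|\frac{1+e_p(-u(v_a-v_b))}{2}\right|^2 = \frac12 + \frac12\cdot\frac1{n!}\sum_\sigma \Re\, e_p(u(v_a - v_b)),
\]
using $|1+z|^2/4 = (1+\Re z)/2$ for $|z|=1$. As $\sigma$ ranges uniformly over $S_n$, the pair $(a,b)$ is uniform over ordered pairs of distinct indices. Therefore the average equals
\[
    \frac{1}{n(n-1)}\sum_{a\ne b} e_p(u v_a)\overline{e_p(u v_b)} \> = \> \frac{n^2|\lambda_v(u)|^2 - n}{n(n-1)}.
\]
This quantity is real, and it is at most $|\lambda_v(u)|^2$ precisely because $|\lambda_v(u)| \le 1$. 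Taking the maximum over $u \ne 0$ then gives the lemma.

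The only delicate step is the bookkeeping in the switching: the permutation convention must make the pairing a genuine bijection, and the pair $(a(\sigma), b(\sigma))$ must be uniformly distributed over ordered distinct pairs. Once that is set up, the rest is the Cauchy--Schwarz step and the elementary identity for the off-diagonal sum.
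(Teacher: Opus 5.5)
Your proposal is correct and follows essentially the same route as the paper's proof. You pair each $\sigma$ with its composite by the transposition swapping two unequal coordinates, apply Cauchy--Schwarz, and evaluate the off-diagonal average as $(n|\lambda_v(u)|^2-1)/(n-1)\le|\lambda_v(u)|^2$; the only cosmetic differences are that you use general indices $i,j$ rather than reducing to $w_1\ne w_2$, and you factor out the unimodular term before applying Cauchy--Schwarz.
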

\begin{proof}
    We apply a switching argument based on the Cauchy--Schwarz inequality.

    Let $w \in V \setminus \gen\one$ and let us bound $\lambda_{v,w}$.
    Obviously $\lambda_{v, w} = \lambda_{v, w^\sigma}$ for all $\sigma \in S_n$, so we may assume $w_1 \ne w_2$ without loss of generality. Let $u = w_1 - w_2$ and let $\tau$ be the transposition $(1,2)$. Then
    \[
        \lambda_{v,w}
        = \frac1{n!} \sum_{\sigma \in S_n} e_p( \gen{v, w^\sigma})
        = \frac1{n!} \sum_{\sigma \in S_n} \frac12 \br{e_p(\gen{v, w^\sigma}) + e_p(\gen{v, w^{\tau\sigma}})}.
    \]
    Applying the Cauchy--Schwarz inequality, we obtain
    \begin{align*}
        |\lambda_{v,w}|^2
        &\le \frac1{n!} \sum_{\sigma \in S_n}
        \frac14 \left|
            e_p(\gen{v, w^\sigma}) + e_p(\gen{v, w^{\tau\sigma}})
        \right|^2 \\
        &= \frac1{n!} \sum_{\sigma \in S_n}
        \frac14 \br{
            2 + 2 \Re\left[e_p(\gen{v, w^\sigma} - \gen{v, w^{\tau\sigma}})\right]
        }\\
        &= \frac12 + \frac12 \Re\left[\frac1{n!}\sum_{\sigma \in S_n}
            e_p(\langle v, (w - w^\tau)^\sigma\rangle)
        \right].
    \end{align*}
    Now $w - w^\tau = (u, -u, 0, 0, \dots)$, so
    \[
        \frac1{n!} \sum_{\sigma \in S_n} e_p(\gen{v, (w-w^\tau)^\sigma})
        = \frac1{n(n-1)} \sum_{\substack{1 \le i, j \le n \\ i \ne j}}
        e_p(u v_i - u v_j),
    \]
    and
    \[
        \sum_{\substack{1 \le i, j \le n \\ i \ne j}} e_p(u v_i - u v_j)
        = \sum_{1 \le i, j \le n} e_p(u v_i - u v_j) - n
        = n^2 |\lambda_v(u)|^2 - n,
    \]
    so we get
    \[
        |\lambda_{v,w}|^2
        \le \frac12 + \frac12 \frac{n|\lambda_v(u)|^2 - 1}{n-1}
        \le \frac12 + \frac12 |\lambda_v(u)|^2.
    \]
    This proves the lemma since $u \ne 0$.
\end{proof}

\begin{proof}[Proof of Theorem~\ref{thExpSum}]
    Let $\eps = 1/2$ and $\delta = 1/100$.
    Let $n$ be an integer, $p \le e^{\delta n}$ a prime,
    and note that $\eps \ge 2/n$ holds.
    For $v \in V_0$ uniformly random,
    by Lemmas~\ref{lem:tail-bound} and \ref{lem:switching} we have
    \begin{align*}
        \Pr\br{\max_{w \in V \setminus \gen\one} |\lambda_{v,w}|^2 \ge (1 + \eps^2)/2}
        &\le \Pr\br{\max_{0 \ne u \in \F_p} |\lambda_v(u)| \ge \eps} \\
        &\le 4 p e^{-\eps^2 n / 8}
        \le e^{(3 \delta - \eps^2 / 8) n} < 1.
    \end{align*}
    This proves the theorem with $\sqrt{(1 + \eps^2) / 2}$ in place of $1 - \eps$, which is sufficient.
\end{proof}

\vspace{0.1cm}
\bibliography{refs-exp-diam}
\vspace{0.3cm}

\end{document}